\documentclass[sn-mathphys,Numbered]{sn-jnl}
\usepackage{graphicx}%
\usepackage{multirow}%
\usepackage{amsmath,amssymb,amsfonts}%
\usepackage{amsthm}%
\usepackage{mathrsfs}%
\usepackage[title]{appendix}%
\usepackage{xcolor}%
\usepackage{textcomp}%
\usepackage{manyfoot}%
\usepackage{booktabs}%
\usepackage{algorithm}%
\usepackage{algorithmicx}%
\usepackage{algpseudocode}%
\usepackage{listings}%
\usepackage{cleveref}
\usepackage{natbib}

\usepackage{cuted}

\let\emptyset\varnothing

\newtheorem{theorem}{Theorem}[section]

\newtheorem{lemma}[theorem]{Lemma}

\theoremstyle{definition}
\newtheorem{definition}[theorem]{Definition}

\newtheorem{remark}[theorem]{Remark}

\numberwithin{equation}{section}
\raggedbottom

\begin{document}
\title[Strict fixed point problem for contraction-type operators]{Strict fixed point problem, stability results and retraction displacement condition for Picard operators}
\author*[1]{\fnm{Cristina} \sur{Gheorghe}}\email{cristina.gheorghe@ubbcluj.ro}

\author[1,2]{\fnm{Adrian} \sur{Petru\c{s}el}}\email{adrian.petrusel@ubbcluj.ro}

\affil*[1]{ \orgname{Faculty of Mathematics and Computer Science, Babeș-Bolyai
University}, \orgaddress{ \city{Cluj-Napoca}, \postcode{400084}, \country{Romania}}}
\affil[2]{ \orgname{Faculty of Mathematics and Computer Science, Babeș-Bolyai
University}, \orgaddress{ \city{Cluj-Napoca}, \postcode{400084}, \country{Romania}}}

\keywords{strict fixed point, multi-valued operator, data dependence, Ulam-Hyers stability, well-posedness, Ostrowski property, complete metric space, Picard operators, contraction-type conditions  }

\pacs[2010 Mathematics Subject Classification]{47H10, 54H25}

\abstract{
 The aim of this paper is to give strict fixed point principles for multivalued operators $T:X\rightarrow P(X)$ satisfying some contraction conditions of \'Ciri\' c and of \'Ciri\' c-Reich-Rus type. We are interested, under which conditions, the multi-valued operator has a unique strict fixed point and, additionally, when the sequence of its multi-valued iterates $(T^n(x))_{n\in \mathbb{N}}$ converges to this unique strict fixed point. Moreover, some stability properties, such as data dependence on operator perturbation, Ulam-Hyers stability, well-posedness in the sense of Reich and Zaslavski and Ostrowski property of the strict fixed point problem are established. 
}

\maketitle

%%%%%%%%%%%%%%%%%%%%%%%%%%%%%%%%%%%%%%%%%%%%%%%%%%%%%%%%%%%%%%%%%%%%%%%%%%%%%%%%

\section{Introduction}
The main metric  fixed point theorem for multivalued contractions on complete metric spaces was given by Nadler in 1969. Then, one year later, Covitz and Nadler extended the result for the case of a generalized metric space. Later on, there were obtained many other extensions of these results. For example, A. Petru\c sel and G. Petru\c{s}el gave in \cite{AGPetruselSomeVariants} a saturated fixed point principle for multivalued contractions. \\
The purpose of this paper is to formulate similar principles for some elements of the class multivalued Picard operators. More precisely, \'Ciri\' c type operators (see \cite{Reich1972}) and \'Ciri\' c-Reich-Rus type operators (see \cite{Ciric}) will be considered. Our goal is to determine under which assumptions the multi-valued operator has a unique strict fixed point and, moreover, when the sequence of its multivalued iterates $(T^n(x))_{n\in \mathbb{N}}$ is convergent to this unique fixed point. We also study some stability properties, such as data dependence problem, Ulam-Hyers stability, well-posedness in the sense of Reich and Zaslavski and Ostrowski property for the strict fixed point problem.

\section*{Terminology and notations} 

We will use the consecrated notions (see, also \cite{Petrusel}, \cite{Carte55Rus}). In this paper, for a metric space $(X,d)$, $P(X)$ stands for the family of all non-empty subsets of X, $P_{cp}$ represent the family of all non-empty compact subsets of X, and $P_{cl}$ is the family of all non-empty closed subsets of X. Also, for a multivalued operator $T:X\rightarrow P(X)$, the following notations are used:
\begin{itemize}
    \item the fixed point set of T:
    \begin{gather*}
         Fix(T):=\{x\in X: x\in T(x)\}
    \end{gather*}
    \item the strict fixed point set of T:
    \begin{gather*}
        SFix(T):=\{x\in X: \{x\}= T(x)\}
    \end{gather*}
    \item the graphic of T:
    \begin{gather*}
        Graph(T):=\{(x,y)\in X\times X: y\in T(x)\}
    \end{gather*}
\end{itemize}

We also recall, in the context of a metric space,  the definitions of the following functionals:
\begin{itemize}
    \item the gap functional generated by $d$:
\begin{gather*}
    D:P(X)\times P(X)\rightarrow \mathbb{R}_{+
},~D(A,B):=\inf\{d(a,b): a\in A, b\in B\}
\end{gather*}

\item the excess functional of A over B generated by d:
\begin{gather*}
    e:P(X)\times P(X)\rightarrow \mathbb{R}_{+}
\cup\{+\infty\}, ~e(A,B):=\sup\{D(a,B): a\in A \};
\end{gather*}

\item the Pompeiu-Hausdorff functional generated by $d$:
\begin{gather*}
    H:P(X)\times P(X)\rightarrow \mathbb{R}_{+
}\cup\{+\infty\},
~H(A,B):=\max\{e(A,B),e(B,A)\}.
\end{gather*}

\end{itemize}
\begin{definition}\label{def2}
    Let (X,d) be a metric space. The operator $T:X\rightarrow P(X) $ is a multivalued weakly Picard operator (MWP operator) if, for each $x\in X$ and for each $y\in T(x)$, there exists a sequence $(x_n)_{n\in\mathbb{N}}$ in X which satisfies the following conditions:
    \begin{itemize}
        \item[i)] $x_0=x, x_1=y$;
        \item[ii)] $x_{n+1}\in T(x_n)$, for all $n\in \mathbb{N}$;
        \item[iii)] the sequence $(x_n)_{n\in\mathbb{N}}$ is convergent and its limit is a fixed point of T.
    \end{itemize}
\end{definition}
\begin{remark}
    Any sequence satisfying conditions i) and ii) from Definition \ref{def2}, is called a sequence of successive approximations of T.
\end{remark}
\begin{definition}
    We call $T:X\rightarrow P(X)$ a multivalued Picard (MP) operator if the following statements hold true:
    \begin{itemize}
        \item[i)] $SFix(T)=Fix(T)=\{x^*\}$;
        \item[ii)] $T^n(x)\xrightarrow{H}\{x^*\}$ as $n\rightarrow \infty$, for each $x\in X.$ 
    \end{itemize}
\end{definition}
\begin{definition}
     A multivalued Picard operator  $T:X\rightarrow P(X)$ is called $\Psi-$ multivalued Picard operator ($\Psi-MP$ operator) if there exists an increasing mapping $\Psi:\mathbb{R_{+}}\rightarrow \mathbb{R_{+}}$ with $\Psi$ continuous at 0 and $\Psi(0)=0$, such that
     \[d(x,x^*)\leq \Psi(D(x,T(x)), \text{ for all  } x\in X.\]
 \end{definition}
 \begin{remark}
    The above condition is also called the strong retraction-displacement condition for the case of unique (strict) fixed point of $T$. 
\end{remark}

\section{Main results for multivalued \'Ciri\'c type contractions}
\subsection*{Retraction-displacement condition}
%\cite{AGPetruselSomeVariants}
%For results related to retractions of the fixed point set, see \cite{AGPetruselSomeVariants}, \cite{Rus}, \cite{Berinde2016},\cite{SCHRODER2016365} \\
In this section, we will give two main fixed point principles for multivalued retractions satisfying some contraction-type conditions. In a similar manner as in Theorem 5.5 from \cite{AGPetruselSomeVariants}, we can formulate the following first main result:
\begin{theorem}\label{th55}
    Let (X,d) be a complete metric space and $T:X\rightarrow P_{cl}(X)$ be an operator  such that $SFix(T)\neq\emptyset$. Suppose there exist $\alpha, \beta, \gamma\geq 0$ with $\alpha+\beta+\gamma<1$ such that
    \begin{align*}
        H(T(x),T(y))&\leq \alpha d(x,y)+\beta D(x, T(y))+\gamma D(y,T(x)), \text{ for all } x,y\in X.
    \end{align*}
    Then
    \begin{itemize}
        \item[i)] $Fix(T)=SFix(T)=\{x^*\}$;
        \item[ii)] for all $x\in X$, the sequence of sets  $(T^n(x))_{n\in \mathbb{N}}$ converges to $\{x^*\}$ with respect to the Pompeiu-Hausdorff metric $H$;
        \item[iii)]  $d(x,x^*)\leq \dfrac{1+\gamma}{(1-\alpha-\beta)\xi}D(x,T(x))$, for all $x\in X$, where $\xi\in (0,1)$ can be arbitrary chosen.
    \end{itemize}
    \begin{proof}
    i) Let $x^*\in SFix(T)
    $. Suppose there exists $y^*\in SFix(T)$ with $x^*\neq y^*$. Then, we have:
    \begin{align*}
    d(x^*, y^*)&=H(T(x^*), T(y^*))\\& 
    \leq \alpha d(x^*, y^*)+\beta D(y^*,T(x^*))+\gamma D(x^*, T(y^*))\\
    &=\alpha d(x^*, y^*)+\beta d(x^*, y^*)+\gamma d(x^*, y^*).
    \end{align*}
    Therefore, 
    \[(1-\alpha-\beta-\gamma)d(x^*, y^*)\leq 0.\]
    Since $\alpha+\beta+\gamma<1$, it follows that $d(x^*, y^*)=0$, so $x^*= y^*$. The contradiction proves that $SFix(T)=\{x^*\}$.\\
    Let $\tilde{x}\in Fix(T)$ with $x^*\neq\tilde{x}$. Then, we have
    \begin{align*}
    d(\tilde{x},x^*)&=D(\tilde{x},T(x^*))\le H(T(\tilde{x}),T(x^*))\\& 
    \leq \alpha d(\tilde{x},x^*)+\beta D(\tilde{x},T(x^*))+\gamma D(x^*, T(\tilde{x}))\\&
    \le\alpha d(x^*, \tilde{x})+\beta d(\tilde{x}, x^*)+\gamma d(x^*, \tilde{x}).
    \end{align*}
    We get again a contradiction and thus $Fix(T)=\{x^*\}$.
    
    ii) Let $x\in X$ be an arbitrary element and consider the sequence $(T^n(x))_{n\in \mathbb{N}}$. Then, we have:
    \begin{align*}
H(T(x),x^*)=H(T(x),T(x^*))&\le \alpha d(x,x^*)+\beta D(x,T(x^*))+\gamma D(x^*,T(x)) \\&
\le\alpha d(x,x^*)+\beta d(x,x^*)+\gamma H(T(x),x^*).
    \end{align*}
Thus, with the notation $k:=\dfrac{\alpha+\beta}{1-\gamma}\in (0,1)$,  we get:
$$H(T(x), x^*)\leq k d(x,x^*).$$
We prove, by mathematical induction, that the following proposition is true
$$P(n): H(T^n(x),\{x^*\})\leq k^n d(x, x^*),   
   \text{ for all } n\in \mathbb{N}^*. $$
   By the above proof, it is clear that P(1) is true. We will prove that $P(n)\Rightarrow  P(n+1).$\\
   We show first that $H(T(Y),x^*)\leq k H(Y,x^*)$, for all $Y\in P_{cl}(X).$ Indeed, let $v\in T(Y)$ be an arbitrary element. Then, there exists $u\in Y$ such that $v\in T(u)$. We have
   $$d(v, x^*)\leq H(T(u),\{x^*\})\leq kd(u, x^*)\leq k H(Y, \{x^*\}).$$
   Taking $\displaystyle\sup_{v\in T(Y)}$ from the previous inequality, we get that
   \begin{equation}\label{eq}
       H(T(Y),\{x^*\})\leq k H(Y, \{x^*\}).
   \end{equation}
   Therefore, we obtain:
   \begin{align*}
    H(T^{n+1}(x),\{x^*\})&=H(T(T^n(x)), \{x^*\}) \\&\overset{(\ref{eq})}{\leq} kH(T^n(x),\{x^*\})\\&\overset{P(n)}{\leq} k^{n+1} H(T(x), \{x^*\}).   
   \end{align*}
   
   The proof of the mathematical induction is now complete. Hence, \[H(T^n(x), \{x^*\})\leq k^n d(x, x^*)\rightarrow 0  \text{ as } n\rightarrow \infty.\]
   Therefore, $T^n(x)\overset{H}{\rightarrow} \{x^*\}$ as $n\rightarrow \infty.$
   
    iii) Let $\xi\in (0,1)$ be arbitrary and let $x\in X$. Then, there exists $\widetilde{y}\in T(x)$ such that $\xi d(x, \widetilde{y})\leq D(x,T(x))$, or, equivalently, \begin{equation}\label{dr}
            d(x,\widetilde{y})\leq\dfrac{1}{\xi} D(x,T(x)).
        \end{equation}
   
    On the other hand,
    \begin{align*}
    d(x,x^{*})&\leq d(x,\widetilde{y})+d(\widetilde{y},x^{*})
    \\&
    =d(x,\widetilde{y})+D(\widetilde{y},T(x^{*}))\\&
    \leq d(x,\widetilde{y})+H(T(x),T(x^{*}))\\&
    \leq d(x,\widetilde{y})+\alpha d(x,x^{*})+\beta D(x,T(x^*))+\gamma D(x^*, T(x))\\&
    \leq d(x, \widetilde{y})+\alpha d(x, x^*)+\beta d(x, x^*)+\gamma d(x^*,\widetilde{y} ).\label{st}
    \end{align*}
    Hence,
    \begin{equation}\label{st}
        (1-\alpha-\beta)d(x,x^{*})\leq (1+\gamma)d(x,\widetilde{y}).
    \end{equation}
    From \eqref{st} and \eqref{dr}, we get the relation iii).
    \end{proof}
\end{theorem}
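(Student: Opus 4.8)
The plan is to exploit the fact that whenever one argument of $H$ (or of $D$) is the singleton $\{x^*\}$, the set functionals collapse to the point metric $d$, which linearises the Ćirić-type condition.

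For part i) I would first settle uniqueness of the strict fixed point. Take $x^*\in SFix(T)$ (nonempty by hypothesis) and suppose a second strict fixed point $y^*$ existed. Since $T(x^*)=\{x^*\}$ and $T(y^*)=\{y^*\}$, all three terms on the right of the contraction reduce to $d(x^*,y^*)$: indeed $H(T(x^*),T(y^*))=d(x^*,y^*)$, $D(x^*,T(y^*))=d(x^*,y^*)$ and $D(y^*,T(x^*))=d(x^*,y^*)$. Feeding this in yields $(1-\alpha-\beta-\gamma)d(x^*,y^*)\le 0$, and $\alpha+\beta+\gamma<1$ forces $x^*=y^*$. For the equality $Fix(T)=SFix(T)$ only the inclusion $Fix(T)\subseteq SFix(T)$ needs work, since $SFix(T)\subseteq Fix(T)$ always holds. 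I would take $\tilde x\in Fix(T)$, so that $\tilde x\in T(\tilde x)$, and start from $d(\tilde x,x^*)=D(\tilde x,T(x^*))\le H(T(\tilde x),T(x^*))$. Applying the contraction and using $D(\tilde x,T(x^*))=d(\tilde x,x^*)$ together with $D(x^*,T(\tilde x))\le d(x^*,\tilde x)$ (the latter because $\tilde x$ itself lies in $T(\tilde x)$) again produces the coefficient $1-\alpha-\beta-\gamma$ in front of $d(\tilde x,x^*)$, hence $\tilde x=x^*$.

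For part ii) the first step is to produce a one-step contraction toward the fixed point: writing $H(T(x),\{x^*\})=H(T(x),T(x^*))$ and using $D(x,T(x^*))=d(x,x^*)$ together with $D(x^*,T(x))\le H(T(x),\{x^*\})$, the last term can be absorbed to the left, giving $H(T(x),\{x^*\})\le k\,d(x,x^*)$ with $k:=(\alpha+\beta)/(1-\gamma)\in(0,1)$. I would then upgrade this to a set-level statement, namely $H(T(Y),\{x^*\})\le k\,H(Y,\{x^*\})$ for every $Y\in P_{cl}(X)$: each $v\in T(Y)$ comes from some $u\in Y$ with $v\in T(u)$, so $d(v,x^*)\le H(T(u),\{x^*\})\le k\,d(u,x^*)\le k\,H(Y,\{x^*\})$; taking the supremum over $v$ controls $e(T(Y),\{x^*\})$, while the reverse excess $e(\{x^*\},T(Y))=D(x^*,T(Y))$ is dominated by it, so the full $H$ is controlled. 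A routine induction then gives $H(T^n(x),\{x^*\})\le k^n d(x,x^*)\to 0$.

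For part iii) I would fix $\xi\in(0,1)$ and $x\in X$. If $D(x,T(x))=0$ then closedness of $T(x)$ forces $x\in T(x)$, i.e. $x\in Fix(T)=\{x^*\}$, and the inequality is trivial; otherwise I would choose a near-minimiser $\tilde y\in T(x)$ of the gap with $d(x,\tilde y)\le\xi^{-1}D(x,T(x))$. Then from $d(x,x^*)\le d(x,\tilde y)+d(\tilde y,x^*)$, rewriting $d(\tilde y,x^*)=D(\tilde y,T(x^*))\le H(T(x),T(x^*))$ and applying the contraction with $D(x,T(x^*))=d(x,x^*)$, I would reorganise into $(1-\alpha-\beta)d(x,x^*)\le(1+\gamma)d(x,\tilde y)$ and combine with the choice of $\tilde y$ to reach the displayed bound. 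The routine parts are the singleton collapses of $H$ and $D$; the step that needs the most care is the handling of the cross term $\gamma D(x^*,T(x))$ in part iii), since the only immediately available estimate is $D(x^*,T(x))\le d(x^*,\tilde y)$, and converting this into a contribution proportional to $d(x,\tilde y)$ (so as to land on the coefficient $1+\gamma$ rather than on a denominator $1-\alpha-\beta-\gamma$) is the delicate bookkeeping. The other genuinely non-trivial ingredient is the passage from the pointwise contraction to the set-valued inequality in part ii), where one must check that both excess functionals defining $H(T(Y),\{x^*\})$ are simultaneously controlled.
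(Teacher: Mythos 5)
Your proposal follows the paper's own proof essentially line by line in all three parts: the singleton collapses of $H$ and $D$ in i), the one-step estimate $H(T(x),\{x^*\})\le k\,d(x,x^*)$ with $k=\frac{\alpha+\beta}{1-\gamma}$ promoted to the set-level inequality $H(T(Y),\{x^*\})\le k\,H(Y,\{x^*\})$ and iterated in ii), and the near-minimiser $\widetilde y\in T(x)$ in iii). Parts i) and ii) are correct; your explicit check of the reverse excess in ii) is in fact automatic, since $H(A,\{x^*\})=\sup_{a\in A}d(a,x^*)=e(A,\{x^*\})$ for any set $A$, and your separate treatment of the case $D(x,T(x))=0$ in iii) is a small point the paper skips but which closedness of $T(x)$ handles exactly as you say.

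The one step you leave open is, however, a genuine gap --- and it is the same gap that sits in the paper's own proof of iii). After bounding $\gamma D(x^*,T(x))\le\gamma\,d(x^*,\widetilde y)$, the only available continuations are $d(x^*,\widetilde y)\le d(x^*,x)+d(x,\widetilde y)$, which after absorption yields $(1-\alpha-\beta-\gamma)\,d(x,x^*)\le(1+\gamma)\,d(x,\widetilde y)$ and hence the constant $\frac{1+\gamma}{(1-\alpha-\beta-\gamma)\xi}$, or else absorbing $\gamma\,d(\widetilde y,x^*)$ into $d(\widetilde y,x^*)$ before applying the triangle inequality, which yields $d(x,x^*)\le\frac{1-\gamma}{1-\alpha-\beta-\gamma}\,d(x,\widetilde y)$. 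Neither route produces the denominator $1-\alpha-\beta$ claimed in the statement; the paper simply passes from $\cdots+\gamma\,d(x^*,\widetilde y)$ to $(1-\alpha-\beta)\,d(x,x^*)\le(1+\gamma)\,d(x,\widetilde y)$, which amounts to silently replacing $d(x^*,\widetilde y)$ by $d(x,\widetilde y)$. For parameters with $2(\alpha+\beta)+\gamma>1$ (e.g.\ $\alpha=0.5$, $\beta=0.3$, $\gamma=0.1$) the stated constant is strictly smaller than both of the constants obtainable above, so no amount of bookkeeping along these lines will recover it. The honest conclusion is iii) with $\frac{1+\gamma}{(1-\alpha-\beta-\gamma)\xi}$ (or the slightly better $\frac{1-\gamma}{(1-\alpha-\beta-\gamma)\xi}$), which is all that the later applications to well-posedness and data dependence actually require.
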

For further results related to retractions of a fixed point set, we refer to \cite{AGPetruselSomeVariants}, \cite{Rus}, \cite{articleGraphContr}, \cite{article}, \cite{SCHRODER2016365}.
\subsection*{Well-posedness of the strict fixed point problem}
\begin{definition}
The strict fixed point problem  $T(x)=\{x\}$ is said to be well-posed in the sense of Reich and Zaslavski (see \cite{Reich1972}, \cite{Reich2005ANO}) if $SFix(T)=\{x^{*}\}$ and the following implication holds:
    $$(u_n)_{n\in\mathbb{N}}\subset X \text{ and }D(u_n,T(u_n))\rightarrow 0\implies u_n\rightarrow x^{*} $$
    as  $n\rightarrow \infty$.
\end{definition}

    \begin{theorem}
    Let (X,d) be a complete metric space and $T:X\rightarrow P_{cl}(X)$ be an operator  such that $SFix(T)\neq\emptyset$. Suppose  there exist $\alpha, \beta, \gamma\geq 0$ with $\alpha+\beta+\gamma<1$ such that
    \[H(T(x),T(y))\leq \alpha d(x,y)+\beta D(x, T(y))+\gamma D(y,T(x)), \text{ for all } x,y\in X. \]
     Then, the strict fixed point problem  $T(x)=\{x\}$ is well-posed in the sense of Reich and Zaslavski.
    \end{theorem}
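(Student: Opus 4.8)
The plan is to combine the strong retraction–displacement estimate established in the previous theorem with the hypothesis $D(u_n, T(u_n)) \to 0$. Since the contraction condition here is identical to that of Theorem~\ref{th55}, I may invoke its conclusions directly: in particular, $SFix(T) = Fix(T) = \{x^*\}$ is a singleton, which already secures the first requirement of well-posedness, and conclusion~iii) gives the quantitative bound
\[
d(x, x^*) \leq \frac{1+\gamma}{(1-\alpha-\beta)\xi}\, D(x, T(x)), \quad \text{for all } x \in X,
\]
where $\xi \in (0,1)$ is arbitrary. This inequality is the engine of the whole argument.

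First I would fix an arbitrary sequence $(u_n)_{n\in\mathbb{N}} \subset X$ satisfying $D(u_n, T(u_n)) \to 0$ as $n \to \infty$. Applying the bound from Theorem~\ref{th55}iii) with $x = u_n$ (and a fixed choice of $\xi$, say $\xi = 1/2$) yields
\[
d(u_n, x^*) \leq \frac{1+\gamma}{(1-\alpha-\beta)\xi}\, D(u_n, T(u_n)), \quad \text{for all } n \in \mathbb{N}.
\]
The constant $C := \frac{1+\gamma}{(1-\alpha-\beta)\xi}$ is finite and independent of $n$, since $\alpha+\beta+\gamma < 1$ forces $1 - \alpha - \beta > \gamma \geq 0$, so the denominator is strictly positive. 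Letting $n \to \infty$ and using $D(u_n, T(u_n)) \to 0$, the right-hand side tends to $0$, whence $d(u_n, x^*) \to 0$, i.e. $u_n \to x^*$. This establishes the required implication and completes the proof of well-posedness.

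I do not anticipate a genuine obstacle here, as the result is essentially a corollary of the retraction–displacement condition: the only point deserving care is the logical dependence on Theorem~\ref{th55}. Specifically, one must note that the present hypotheses ($SFix(T) \neq \emptyset$ together with the \'Ciri\'c-type inequality) are exactly those of Theorem~\ref{th55}, so all three of its conclusions hold without re-proof. A secondary detail worth stating explicitly is that the arbitrariness of $\xi \in (0,1)$ in conclusion~iii) is harmless; any single admissible value produces a valid finite constant $C$, so there is no need to optimize over $\xi$. The argument is thus a direct, short deduction rather than a construction requiring new estimates.
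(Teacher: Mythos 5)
Your argument is correct and coincides with the paper's own proof, which likewise deduces well-posedness directly from the retraction--displacement estimate iii) of Theorem~\ref{th55} applied to $x=u_n$. (The paper also sketches a second, self-contained estimate not relying on iii), but the primary route is exactly yours.)
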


\begin{proof}
    Notice first that, by Theorem 1, $Fix(T)=SFix(T)=\{x^*\}.$\\
     \quad The conclusion follows immediately from the strong retraction-displacement relation iii) in Theorem \ref{th55}, by observing that $$d(u_n, x^*)\leq \dfrac{1+\gamma}{(1-\alpha-\beta)\xi} D(u_n, T(u_n))\rightarrow 0 \text{ as } n\rightarrow \infty.$$
    Alternatively, we have:
    \begin{align*}
        d(u_n,x^*) &\leq D(u_n, T(u_n))+H(T(u_n), T(x^*))\\&
        \nonumber\leq D(u_n, T(u_n))+\alpha d(u_n, x^*)+\beta D(x^*,T(u_n))+\gamma D(u_n, T(x^*))\\&
        \leq D(u_n, T(u_n))+\alpha d(u_n, x^*)+\beta[ d(x^*,u_n)+D(u_n, T(u_n))]+\gamma d(u_n, x^*) 
    \end{align*}
    Then, we get 
        \[d(u_n,x^*)\leq \dfrac{\beta+1}{1-\alpha-\beta-\gamma}D(u_n, T(u_n))\rightarrow 0,\]
         as  $n\rightarrow \infty.$
\end{proof}
\subsection*{Ostrowski stability}
\begin{definition}
    The strict fixed point problem  $T(x)=\{x\}$ has the Ostrowski stability property if 
    $SFix(T)=\{x^{*}\}$ and the following implication holds:
    $$(v_n)_{n\in\mathbb{N}} \subset X, D(v_{n+1}, T(v_n))\rightarrow 0\implies  v_n\rightarrow x^* \mbox{ as } n\rightarrow \infty.$$
\end{definition}
The following auxiliary result will be important in our next proof.
\begin{lemma}
    (Cauchy-Toeplitz lemma, see \cite{CauchyToeplitz}) Let $(a_n)_{n\in \mathbb{N}}$ be a sequence of positive real numbers such that the series $\displaystyle\sum_{n=0}^{\infty} a_n$ is convergent. Let $(b_n)_{n\in\mathbb{N}}$ a sequence of non-negative numbers, that converges to zero. Then:
    $$\lim_{n\rightarrow \infty} \Bigg(\sum_{k=0}^n a_{n-k} b_k\Bigg)=0.$$
\end{lemma}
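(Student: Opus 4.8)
The plan is to prove the Cauchy--Toeplitz lemma directly by an elementary $\varepsilon/2$-argument, splitting the finite sum $\sum_{k=0}^n a_{n-k}b_k$ into two blocks: a ``tail'' block where the index $k$ is large (so that $b_k$ is small) and a ``head'' block where $k$ is small (so that the companion indices $n-k$ are large, forcing the partial sum of the $a$'s there to be a genuine tail of the convergent series). Since $\sum_{n=0}^\infty a_n$ converges, its terms are bounded, say $a_n\le M$ for all $n$, and its tails $R_N:=\sum_{j\ge N}a_j$ tend to $0$; since $(b_n)$ converges to $0$, it is bounded, say $b_n\le B$ for all $n$, and $b_k$ is eventually small.

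First I would fix $\varepsilon>0$. Using $b_k\to 0$, choose $N_1$ so that $b_k<\varepsilon$ for all $k\ge N_1$. Using the convergence of $\sum a_n$, let $S:=\sum_{n=0}^\infty a_n<\infty$ and choose $N_2$ so that the tail $R_{N_2}=\sum_{j\ge N_2}a_j<\varepsilon$. For $n$ large (specifically $n\ge N_1+N_2$), I would split
\begin{align*}
\sum_{k=0}^n a_{n-k}b_k
&=\sum_{k=0}^{N_1-1} a_{n-k}b_k+\sum_{k=N_1}^{n} a_{n-k}b_k.
\end{align*}
In the first block, $k\le N_1-1$ forces $n-k\ge n-N_1+1\ge N_2$, so each $a_{n-k}$ is a term of the tail of $\sum a_j$ beyond index $N_2$; bounding $b_k\le B$ gives
\begin{align*}
\sum_{k=0}^{N_1-1} a_{n-k}b_k\le B\sum_{k=0}^{N_1-1} a_{n-k}\le B\,R_{N_2}<B\varepsilon.
\end{align*}
In the second block, $b_k<\varepsilon$ for every $k\ge N_1$, so factoring it out and using $\sum_{k=N_1}^n a_{n-k}\le S$ yields
\begin{align*}
\sum_{k=N_1}^{n} a_{n-k}b_k\le \varepsilon\sum_{k=N_1}^{n} a_{n-k}\le \varepsilon\,S.
\end{align*}

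Combining the two estimates gives $\sum_{k=0}^n a_{n-k}b_k\le (B+S)\varepsilon$ for all $n\ge N_1+N_2$, and since $B+S$ is a fixed constant and $\varepsilon>0$ was arbitrary, the limit is $0$. The only delicate point---and the step I would watch most carefully---is the bookkeeping in the head block: one must verify that the constraint $k\le N_1-1$ together with $n\ge N_1+N_2$ really does push the index $n-k$ past $N_2$, so that the corresponding $a$-terms form a tail of the convergent series rather than a fresh partial sum. Everything else is a routine boundedness-and-tail estimate, so I would present the index arithmetic explicitly and keep the rest brief.
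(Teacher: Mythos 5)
Your proof is correct. Note, however, that the paper itself gives no proof of this lemma at all: it is quoted as an auxiliary known result with a citation to the reference indicated in the statement, and is then used as a black box in the Ostrowski-stability theorems. So there is no internal argument to compare against; what you have done is supply the missing (classical) proof, and your version is the standard one: split the convolution sum $\sum_{k=0}^n a_{n-k}b_k$ at a fixed index $N_1$, bound the head by $B\cdot R_{N_2}$ using that the shifted indices $n-k$ land in a tail of the convergent series, and bound the tail of the sum by $\varepsilon\cdot S$ using smallness of $b_k$. Your index bookkeeping is right: for $n\ge N_1+N_2$ and $k\le N_1-1$ one has $n-k\ge N_2+1>N_2$, and since the indices $n-k$ in each block are distinct and the $a_j$ are positive, the two partial sums are indeed dominated by $R_{N_2}$ and $S$ respectively. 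Two cosmetic remarks: the bound $a_n\le M$ you introduce at the start is never used and can be deleted; and you could state explicitly that $B=\sup_n b_n<\infty$ exists because a convergent sequence is bounded, though this is immediate. As written, the argument is complete and self-contained, which is arguably an improvement over the paper's citation-only treatment, since the stability results of both main sections lean on this lemma.
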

\begin{theorem}
 Let (X,d) be a complete metric space and $T:X\rightarrow P(X)$ be an operator  such that $SFix(T)\neq\emptyset$. Suppose  there exist $\alpha, \beta, \gamma\geq 0$ with $\alpha+\beta+\gamma<1$ such that
    \[H(T(x),T(y))\leq \alpha d(x,y)+\beta D(x, T(y))+\gamma D(y,T(x)),  \text{ for all } x,y\in X.\]
    The strict fixed point problem  $T(x)=\{x\}$ has the Ostrowski stability property.
\end{theorem}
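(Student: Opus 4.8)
The plan is to convert the set-valued Ostrowski condition into a scalar linear recursion for the displacements $d(v_n,x^*)$ and then to annihilate that recursion using the Cauchy--Toeplitz lemma.

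First I would extract two facts from the contraction hypothesis, both of which are already contained in the proof of Theorem~\ref{th55} and neither of which uses closedness of the values of $T$. The uniqueness argument of part i) gives that $SFix(T)$ is a singleton, so $SFix(T)=\{x^*\}$, which is required by the definition of the Ostrowski property. The computation in part ii) yields the contraction-type estimate
\[
H(T(x),\{x^*\})\le k\,d(x,x^*)\quad\text{for all }x\in X,\qquad k:=\frac{\alpha+\beta}{1-\gamma},
\]
where $k\in(0,1)$ because $\alpha+\beta+\gamma<1$ forces $\alpha+\beta<1-\gamma$.

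Next, fix a sequence $(v_n)_{n\in\mathbb{N}}\subset X$ with $D(v_{n+1},T(v_n))\to 0$ and estimate $d(v_{n+1},x^*)$ through the gap-functional triangle inequality $D(a,C)\le D(a,B)+H(B,C)$ applied with $a=v_{n+1}$, $B=T(v_n)$, $C=\{x^*\}$. Since $D(v_{n+1},\{x^*\})=d(v_{n+1},x^*)$ and using the estimate above, this produces
\[
d(v_{n+1},x^*)\le D(v_{n+1},T(v_n))+H(T(v_n),\{x^*\})\le D(v_{n+1},T(v_n))+k\,d(v_n,x^*).
\]
Writing $s_n:=d(v_n,x^*)$ and $b_n:=D(v_{n+1},T(v_n))$, this is the linear recursion $s_{n+1}\le b_n+k\,s_n$. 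Unrolling it (by a routine induction) gives
\[
s_{n+1}\le \sum_{j=0}^{n} k^{\,n-j}\,b_j + k^{\,n+1}\,s_0.
\]

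Finally I would pass to the limit. The term $k^{n+1}s_0\to 0$ since $k\in(0,1)$, and to the sum $\sum_{j=0}^{n}k^{\,n-j}b_j$ I apply the Cauchy--Toeplitz lemma with $a_n:=k^n$, whose series converges to $1/(1-k)$, and with $b_n\to 0$ by hypothesis; the lemma then yields $\sum_{j=0}^{n}k^{\,n-j}b_j\to 0$. Hence $s_n\to 0$, i.e. $v_n\to x^*$, which is exactly the Ostrowski stability property. There is no serious obstacle here: the only points requiring care are selecting the correct triangle inequality for the gap functional $D$ (so that the one-step estimate is legitimate even for non-closed values) and checking that the weights $a_n=k^n$ are summable; the genuine analytic content is packaged inside the Cauchy--Toeplitz lemma.
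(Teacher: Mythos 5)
Your proof is correct and follows essentially the same route as the paper: a one-step estimate $d(v_{n+1},x^*)\le c\,D(v_{n+1},T(v_n))+k\,d(v_n,x^*)$ with $k=\frac{\alpha+\beta}{1-\gamma}$, unrolled into a convolution sum and killed by the Cauchy--Toeplitz lemma. The only (harmless) difference is that you reuse the bound $H(T(x),\{x^*\})\le k\,d(x,x^*)$ already established in Theorem~\ref{th55}(ii), obtaining the constant $1$ in front of $D(v_{n+1},T(v_n))$ where the paper re-derives the estimate and gets $\frac{1+\gamma}{1-\gamma}$; you are also slightly more careful in retaining the $k^{n+1}d(v_0,x^*)$ term when unrolling.
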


\begin{proof}
Notice first that, by Theorem \ref{th55}, $Fix(T)=SFix(T)=\{x^*\}.$\\
Then, we have:
    \begin{align*}
        d(v_{n+1}, x^*)&\leq D(v_{n+1}, T(v_n))+H(T(v_n), T(x^*))\\& 
        \nonumber \leq D(v_{n+1}, T(v_n))+\alpha d(v_n, x^*)+\beta D(v_n, T(x^*))+\gamma D(T(v_n), x^*)\\&
        \nonumber = D(v_{n+1}, T(v_n))+(\alpha +\beta) d(v_{n}, x^*)+\gamma D(T(v_n), x^*)\\&
        \nonumber \leq D(v_{n+1}, T(v_n))+(\alpha +\beta) d(v_{n}, x^*)+\gamma(d(v_{n+1}, x^*)+D(v_{n+1}, T(v_n))).
    \end{align*}
    Then, denoting by $k:=\dfrac{\alpha +\beta}{1-\gamma}$, with $0<k<1$, we have
    \begin{align*}
        d(v_{n+1}, x^*)&\leq \frac{1+\gamma}{1-\gamma} D(v_{n+1}, T(v_n))+k d(v_{n},x^*)\\
        &\leq \frac{1+\gamma}{1-\gamma} D(v_{n+1},T(v_n))+k\Big[\frac{1+\gamma}{1-\gamma} D(v_n, T(v_{n-1}))+kd(v_{n-1},x^*)\Big]\\
        &=\frac{1+\gamma}{1-\gamma} D(v_{n+1}, T(v_n))+k\frac{1+\gamma}{1-\gamma} D(v_n, T(v_{n-1}))+k^2 d(v_{n-1},x^*)\\
        &\dots\\
        &\leq \frac{1+\gamma}{1-\gamma}\Big[D(v_{n+1}, T(v_n))+k D(v_{n}, T(v_{n-1})+\cdots + k^n D(v_1, T(v_0))\Big]. 
    \end{align*}
  Since $\displaystyle \sum_{n=0}^\infty k^n $ converges and $\displaystyle\lim_{n\rightarrow\infty} D(v_{n+1}, T(v_n))=0$, from the Cauchy-Toeplitz Lemma, it follows that $d(v_{n+1}, x^*)\rightarrow 0$, so the strict fixed point problem has the Ostrowski stability property. 
    
\end{proof}
\subsection*{Data dependence of the strict fixed point problem}
\begin{definition}\label{def DDE}
    The strict fixed point problem $T(x)=\{x\}$ has the data dependence property if, for each multivalued operator $F:X\rightarrow P(X)$, with the properties:
    \begin{itemize}
        \item[i)] $SFix(F)\neq\emptyset$;
        \item[ii)] there exists $\eta>0$ such that $H(T(x),F(x))\leq \eta$, for all $x\in X;$
    \end{itemize}
    and for each strict fixed point $x^*$ of $T$, there exists a strict fixed point $u^*\in X$ of $F$ such that $d(x^*,u^*)\leq c\eta$, for some $c>0$.
\end{definition}
\begin{theorem}
  Let (X,d) be a complete metric space, $T:X\rightarrow P_{CL}(X)$ such that $SFix(T)\neq \emptyset$ and there exist $\alpha, \beta, \gamma \geq 0$ with $\alpha+\beta+\gamma<1$, such that 
  \[H(T(x),T(y))\leq \alpha d(x,y)+\beta D(x,T(y))+\gamma D(y, T(x)), \text{ for all } x,y\in X.\]
  Then, the strict fixed point problem $T(x)=\{x\}$ has the data dependence property.
  \begin{proof}
      From Theorem \ref{th55}, it is immediately that $
      SFix(T)=Fix(T)=\{x^*\}$ and $$d(x,x^*)\leq \widetilde{L} D(x,T(x)), \text{ for all } x\in X,$$ where $\widetilde{L}=\dfrac{1+\gamma}{(1-\alpha-\beta)\xi}>0$.\\
      Let $F:X\rightarrow P(X)$ a multivalued operator satisfying properties i) and ii) from Definition \ref{def DDE}. Let $u^*\in SFix(F)$. Then, 
      \[d(u^*, x^*)\leq \widetilde{L} D(u^*, T(u^*))= \widetilde{L} H(F(u^*), T(u^*))\leq \widetilde{L} \eta.\]
      \end{proof}
\end{theorem}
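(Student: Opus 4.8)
The plan is to derive the data dependence property directly from the strong retraction-displacement estimate already obtained in Theorem~\ref{th55}. Since $T$ satisfies the \'Ciri\'c-type contraction hypothesis with $SFix(T) \neq \emptyset$, that theorem yields $Fix(T) = SFix(T) = \{x^*\}$ together with the inequality $d(x, x^*) \leq \widetilde{L}\, D(x, T(x))$ for all $x \in X$, where $\widetilde{L} = \frac{1+\gamma}{(1-\alpha-\beta)\xi} > 0$ for an arbitrarily fixed $\xi \in (0,1)$. Because the strict fixed point of $T$ is unique, verifying the data dependence property reduces to producing, for a given admissible perturbation $F$, a single strict fixed point $u^*$ of $F$ whose distance to $x^*$ is controlled by $\eta$.

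First I would fix a multivalued operator $F : X \to P(X)$ satisfying conditions i) and ii) of Definition~\ref{def DDE}, and choose any $u^* \in SFix(F)$, which is possible by i); thus $F(u^*) = \{u^*\}$. The idea is to apply the retraction-displacement estimate to the point $u^*$ itself, obtaining $d(u^*, x^*) \leq \widetilde{L}\, D(u^*, T(u^*))$, and then to bound the gap $D(u^*, T(u^*))$ through the uniform proximity of $T$ and $F$ granted by ii).

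The crucial observation is that, since $u^*$ is a \emph{strict} fixed point of $F$, the singleton $\{u^*\}$ equals $F(u^*)$, so the gap between $u^*$ and $T(u^*)$ is exactly an excess term dominated by the Pompeiu-Hausdorff distance: $D(u^*, T(u^*)) = e(\{u^*\}, T(u^*)) \leq H(F(u^*), T(u^*)) \leq \eta$. Combining this with the previous inequality gives $d(u^*, x^*) \leq \widetilde{L}\, \eta$, so the data dependence property holds with the explicit constant $c = \widetilde{L}$.

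I do not anticipate a serious obstacle: essentially all the substance is carried by Theorem~\ref{th55}, and the single genuine step is recognizing that the strictness of $u^*$ collapses the gap $D(u^*, T(u^*))$ into the one-sided Hausdorff distance $H(F(u^*), T(u^*))$. The remaining bounds are immediate from the definitions of the gap, excess and Pompeiu-Hausdorff functionals.
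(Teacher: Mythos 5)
Your proposal is correct and follows essentially the same route as the paper: invoke the retraction-displacement estimate from Theorem~\ref{th55} at the strict fixed point $u^*$ of $F$ and bound $D(u^*,T(u^*))$ by $H(F(u^*),T(u^*))\le\eta$. Your version is in fact slightly more careful than the paper's, which writes $D(u^*,T(u^*))=H(F(u^*),T(u^*))$ where only the inequality $\le$ is needed (and is what actually holds in general).
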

\subsection*{Ulam-Hyers stability}
\begin{definition}
    The strict fixed point problem $T(x)=\{x\}$ is Ulam-Hyers  stable if there exists $c>0$ such that for any solution $y^*$ of the relation
    \[D(y,T(y))\leq \varepsilon,\]
    there exists $x^*\in SFix(T)$ satisfying the following relation:
    \[d(y^*, x^*)\leq c\varepsilon.\]
\end{definition}

\begin{theorem}
    Let (X,d) be a complete metric space, $T:X\rightarrow P(X)$ be such that $SFix(T)\neq \emptyset$. Suppose there exist $\alpha, \beta, \gamma \geq 0$ with $\alpha+\beta+\gamma<1$, such that 
  \[H(T(x),T(y))\leq \alpha d(x,y)+\beta D(x,T(y))+\gamma D(y, T(x)), \text{ for all } x,y\in X.\]
  Then, the strict fixed point problem $T(x)=\{x\}$ is Ulam-Hyers stable.
  \begin{proof}
      From Theorem \ref{th55}, we know that $SFix(T)=\{x^*\}$. Let  $y^*$ a solution of the inequation $D(y,T(y))\leq \varepsilon$. Then, 
      \begin{align*}
          d(y^*, x^*)&=D(y^*, T(x^*))\\&\leq D(y^*, T(y^*))+H(T(y^*), T(x^*))\\&
          \leq \varepsilon+\alpha d(x^*,y^*)+\beta D(x^*,T(y^*))+\gamma D(y^*, T(x^*))\\&
          \leq \varepsilon+(\alpha+\beta+\gamma)d(x^*,y^*)+\beta D(y^*, T(y^*))\\&
          \leq (1+\beta)\varepsilon +(\alpha+\beta+\gamma) d(x^*, y^*).
      \end{align*}
    In conclusion, 
    \[d(y^*, x^*)\leq c\varepsilon,\]
    where $c:=\dfrac{1+\beta}{1-\alpha-\beta-\gamma}>0.$
  \end{proof}
  \end{theorem}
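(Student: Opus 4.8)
The plan is to reduce everything to a single scalar inequality in the unknown quantity $d(y^*, x^*)$ and then solve it, exploiting the strict fixed point structure to turn gap functionals into ordinary distances. First I would invoke Theorem \ref{th55}, which guarantees that $SFix(T) = Fix(T) = \{x^*\}$ is a singleton; in particular $T(x^*) = \{x^*\}$, a fact I will use repeatedly. Then I fix an arbitrary $\varepsilon > 0$ and let $y^*$ be any solution of $D(y^*, T(y^*)) \leq \varepsilon$.

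The starting observation is that, since $T(x^*) = \{x^*\}$, one has exactly $D(y^*, T(x^*)) = d(y^*, x^*)$. I would then insert $T(y^*)$ by means of the standard gap-triangle inequality $D(y^*, T(x^*)) \leq D(y^*, T(y^*)) + H(T(y^*), T(x^*))$, and apply the \'Ciri\'c-type contraction hypothesis to $H(T(x^*), T(y^*))$, using the symmetry of $H$.

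Next I would dispose of the two mixed gap terms that appear. The term $D(y^*, T(x^*))$ is again equal to $d(y^*, x^*)$ by strictness, while $D(x^*, T(y^*))$ is controlled by the triangle inequality $D(x^*, T(y^*)) \leq d(x^*, y^*) + D(y^*, T(y^*))$. Substituting these and using $D(y^*, T(y^*)) \leq \varepsilon$, all the genuine distance terms collapse into a single coefficient $(\alpha + \beta + \gamma)$ in front of $d(y^*, x^*)$, while the remaining contributions are bounded by a constant multiple of $\varepsilon$.

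Collecting terms yields $(1 - \alpha - \beta - \gamma)\, d(y^*, x^*) \leq (1 + \beta)\varepsilon$, and since $\alpha + \beta + \gamma < 1$ the coefficient on the left is strictly positive, so dividing produces the desired bound with $c := \dfrac{1+\beta}{1 - \alpha - \beta - \gamma} > 0$. There is no deep obstacle beyond bookkeeping; the one point that requires care is to apply strictness of $x^*$ consistently, replacing every occurrence of $T(x^*)$ by $\{x^*\}$, and to choose the orientation of the contraction inequality (which argument plays the role of $x$ and which of $y$) so that the self-referential gap $D(y^*, T(x^*))$ is absorbed cleanly into the left-hand side rather than generating an uncontrolled term.
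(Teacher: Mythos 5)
Your proposal is correct and follows essentially the same route as the paper's proof: start from $d(y^*,x^*)=D(y^*,T(x^*))$, insert $T(y^*)$ via the gap--Hausdorff triangle inequality, apply the \'Ciri\'c condition, convert $D(y^*,T(x^*))$ to $d(y^*,x^*)$ by strictness and bound $D(x^*,T(y^*))$ by $d(x^*,y^*)+\varepsilon$, then solve for $d(y^*,x^*)$ to obtain $c=\dfrac{1+\beta}{1-\alpha-\beta-\gamma}$. No substantive differences to report.
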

  \begin{remark}
      By the above proof, we observe that for each strict fixed point $x^*$ of T, there exists a fixed point $\widetilde{u}$ of F such that $$d(x^*, \widetilde{u})\leq c\eta,  \text{ for some } c>0. $$
      
  \end{remark}

%\begin{theorem}
  %  Let (X,d) be a complete metric space and $T:X\rightarrow P(X)$ a $\Psi$-MWP operator such that $SFix(T)\neq \emptyset$ and the inclusion $T(x)=\{x\}$ is Ulam-Hyers stable. Then, 
  %  \[d(y^*, x^*)\leq \Psi(c\varepsilon).\]
   % \begin{proof}
   %     Since T is a $\Psi-MWP$ operator, we have
    %    \[d(y^*,x^*)\leq \Psi(D(y^*,T(x^*))\leq \Psi(c\varepsilon).\]
   % \end{proof}
%\end{theorem}
\subsection*{Quasi-contraction condition}
\begin{definition}
   Let (X,d) be a complete metric space. An operator $T:X\rightarrow P(X)$ is called a multivalued strong quasi-contraction if $SFix(T)\neq \emptyset$ and there exists $l\in (0,1)$ such that
    \[D(T(x),x^*)\leq l d(x,x^*), \text{ for all } x^*\in SFix(T) \text{ and for all } x\in X.\]
    
\end{definition}
\begin{remark}
    If T is a strong quasi-contraction, then $SFix(T)=\{x^*\}.$
\end{remark}
\begin{proof}
    Let $x^*\in SFix(T)$ and  $y\in SFix(T),$ with $y\neq x^*.$ Then, 
    $$D(T(y), x^*)\leq l d(y, x^*),$$
    or, equivalently $(1-l)d(y, x^*)\leq 0$. This leads to a contradiction. Thus, $SFix(T)$ is a singleton. 
\end{proof}
\begin{theorem}
Let (X,d) be a complete metric space and $T:X\rightarrow P(X)$ be such that $SFix(T)\neq \emptyset$. Suppose  there exist $\alpha, \beta, \gamma \geq 0$ with $\alpha+\beta+\gamma<1$, such that 
  \[H(T(x),T(y))\leq \alpha d(x,y)+\beta D(x,T(y))+\gamma D(y, T(x)), \text{ for all } x,y\in X.\]
    Then, the operator $T$ is a multivalued strong quasi-contraction.
    \begin{proof}
        By Theorem \ref{th55}, we get that $Fix(T)=SFix(T)=\{x^*\}$. \\
        Then, we have
        \begin{align*}
        D(T(x),x^*)&=H(T(x),T(x^*))\\&\leq \alpha d(x,x^*)+\beta D(x,T(x^*))+\gamma D(x^*, T(x)).
        \end{align*}
    Thus, we get that
    \[D(T(x),x^*)\leq l d(x,x^*), \text{ with } l:=\dfrac{\alpha+\beta}{1-\gamma}\in (0,1).\]
        
    \end{proof}
\end{theorem}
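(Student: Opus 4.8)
The plan is to reduce everything to the unique strict fixed point furnished by Theorem~\ref{th55} and then to specialize the contraction hypothesis by taking $y=x^{*}$. First I would invoke Theorem~\ref{th55}, whose hypotheses coincide exactly with those assumed here, to conclude that $Fix(T)=SFix(T)=\{x^{*}\}$. This is important for two reasons: it guarantees that $SFix(T)$ is the singleton $\{x^{*}\}$, so the universal quantifier ``for all $x^{*}\in SFix(T)$'' appearing in the definition of a strong quasi-contraction reduces to verifying a single estimate, and it provides the equality $T(x^{*})=\{x^{*}\}$ that will be used repeatedly below.

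Next I would fix an arbitrary $x\in X$ and bound $D(T(x),x^{*})$ from above by $H(T(x),T(x^{*}))$. The key observation is that, since $T(x^{*})=\{x^{*}\}$, the gap $D(T(x),x^{*})=D(x^{*},T(x))$ is precisely the excess $e(\{x^{*}\},T(x))$, which is one of the two quantities in the maximum defining the Pompeiu--Hausdorff functional $H(T(x),\{x^{*}\})$. Consequently $D(T(x),x^{*})\le H(T(x),T(x^{*}))$, which is the step that lets the problem be attacked through the contraction inequality.

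Then I would substitute $y=x^{*}$ into the hypothesis. Using $T(x^{*})=\{x^{*}\}$ gives $D(x,T(x^{*}))=d(x,x^{*})$ and $D(x^{*},T(x))=D(T(x),x^{*})$, so that
\[
D(T(x),x^{*})\le \alpha\,d(x,x^{*})+\beta\,d(x,x^{*})+\gamma\,D(T(x),x^{*}).
\]
Moving the $D(T(x),x^{*})$ term to the left yields $(1-\gamma)D(T(x),x^{*})\le(\alpha+\beta)d(x,x^{*})$, and dividing by $1-\gamma>0$ produces the required inequality $D(T(x),x^{*})\le l\,d(x,x^{*})$ with $l:=\dfrac{\alpha+\beta}{1-\gamma}$. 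The constraint $\alpha+\beta+\gamma<1$ forces $\alpha+\beta<1-\gamma$, hence $l\in(0,1)$, completing the verification.

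The argument is essentially mechanical, and I expect the main obstacle to be nothing more than the careful bookkeeping of the one-sided functionals: one must recognize that $D(T(x),x^{*})$ is the excess of $\{x^{*}\}$ over $T(x)$ rather than the full symmetric distance, so that the bound $D(T(x),x^{*})\le H(T(x),T(x^{*}))$ is genuinely valid, and one must keep track of how the substitution $y=x^{*}$ collapses two of the three right-hand terms onto the single quantity $d(x,x^{*})$ while leaving the third proportional to $D(T(x),x^{*})$ itself.
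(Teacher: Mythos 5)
Your proposal is correct and follows essentially the same route as the paper: invoke Theorem~\ref{th55} for uniqueness of the strict fixed point, set $y=x^{*}$ in the contraction condition, and absorb the $\gamma D(x^{*},T(x))$ term to obtain $l=\frac{\alpha+\beta}{1-\gamma}$. Your only (welcome) refinement is to justify $D(T(x),x^{*})\le H(T(x),T(x^{*}))$ as an inequality via the excess functional, where the paper writes it, slightly imprecisely, as an equality.
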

\section{Main results for multivalued \'Ciri\' c-Reich-Rus type contractions}

In the following, we will give another fixed point principle for multivalued operators satisfying another contraction-type condition. 
\begin{theorem}\label{th55r2}
    Let (X,d) a complete metric space and let  $T:X\rightarrow P_{cl}(X)$ be an operator with $SFix(T)\neq \emptyset$. Suppose 
    there exists $\alpha, \beta, \gamma \geq 0 $, with $\alpha<1$ such that
    \[H(T(x),T(y))\leq \alpha d(x,y)+\beta D(x, T(x))+\gamma D(y, T(y)), \text{ for all } x,y\in X. \]
    Then:
    \begin{itemize}
        \item[i)] $Fix(T)=SFix(T)=\{x^*\}$;
        \item[ii)] $d(x,x^*)\leq \dfrac{1+\beta}{(1-\alpha)\xi}D(x,T(x))$, for all $x\in X$, where $\xi\in (0,1)$ can be arbitrary chosen.
        \item[iii)]  if, additionally, $\alpha+2\beta<1$, then, for all $x\in X$, the sequence of sets  $(T^n(x))_{n\in \mathbb{N}}$ converges to $\{x^*\}$ with respect to the Pompeiu-Hausdorff metric $H$;
    \end{itemize}
   \begin{proof}
   %The proof is similar to the proof of Theorem \ref{th55}.\\
    i) Let $x^*\in SFix(T)$ and suppose there exists $y^*\in SFix(T)$ such that $x^*\neq y^*$. We get that 
    \begin{align*}
    d(x^*,y^*)&=H(T(x^*), T(y^*))\\&\leq \alpha d(x^*, y^*)+\beta D(x^*, T(x^*))+\gamma D(y^*, T(y^*)).
    \end{align*}
    Hence, $(1-\alpha) d(x^*, y^*)\leq 0$.  \\Since $\alpha<1$, we get the contradiction $x^*=y^*$.Therefore, $SFix(T)=\{x^*\}$.\\
    Let $\widetilde{x}\in Fix(T)$ be a fixed point of $T$ different from $x^*$. Then, 
    \begin{align*}
    d(x,x^*)&=D(\widetilde{x}, T(x^*))\\&\leq H(T(\widetilde{x}), T(x^*))\\&
    \leq \alpha d(\widetilde{x},x^*)+\beta D(\widetilde{x}, T(\widetilde{x}))+\gamma D(x^*, T(x^*))\\
    &=\alpha d(\widetilde{x}, x^*).
    \end{align*}
    Hence, $(1-\alpha)d(\widetilde{x},x^*)\leq 0$ and we get the contradiction which proves that $\widetilde{x}=x^*$. Thus, $Fix(T)=\{x^*\}$.\\
    ii) Let $\xi\in(0,1)$ be an arbitrary value and let $x\in X$. Similarly to the proof of Theorem \ref{th55}, there exists $\widetilde{y}\in T(x)$ such that
    \begin{equation}\label{ec41}
           d(x,\widetilde{y})\leq \frac{1}{\xi}D(x, T(x)).
    \end{equation}
 
    On the other hand, 
    \begin{align*}
    d(x,x^*)&\leq d(x, \widetilde{y})+d(\widetilde{y}, x^*)\\
    &\leq d(x, \widetilde{y})+H(T(x), T(x^*))\\&\leq 
    d(x,\widetilde{y})+\alpha d(x,x^*)+\beta D(x, T(x))+\gamma D(x^*, T(x^*))\\
    &\leq d(x, \widetilde{y})+\alpha d(x, x^*)+\beta d(x, \widetilde{y}).
    \end{align*}
    Hence we get 
    \begin{equation}\label{ec42}
        d(x,x^*)\leq \dfrac{1+\beta}{1-\alpha}d(x, \widetilde{y}).
    \end{equation}
    Thus, from (\ref{ec41}) and (\ref{ec42}), we obtain that
    \[d(x,x^*)\leq \frac{1+\beta}{\xi(1-\alpha)} D(x,T(x)).\]
    iii) Let $x\in X$ be an arbitrary element. Considering the sequence $(T^n(x))_{n\in\mathbb{N}}$, we have
    \begin{align*}
        H(T(x),\{x^*\})&=H(T(x),T(x^*))\\
        &\leq \alpha d(x,x^*)+\beta D(x,T(x))+\gamma D(x^*,T(x^*))\\&
        \leq \alpha d(x,x^*)+\beta [d(x,x^*)+D(x^*,T(x))]\\
        &\leq(\alpha+\beta) d(x,x^*)+\beta H(T(x), \{x^*\}). 
    \end{align*}
    In consequence, $H(T(x),\{x^*\})\leq \dfrac{\alpha+\beta}{1-\beta} d(x,x^*)$.\\ By denoting $k:=\dfrac{\alpha+\beta}{1-\beta}$, we observe that $0<k<1$ and we can easily prove by mathematical induction that 
    \[P(n): H(T^n(x),\{x^*\})\leq k^n d(x,x^*)\]
    holds true for every $n\in \mathbb{N}^*$. Therefore, $(T^n(x))_{n\in \mathbb{N}}$ converges to $\{x^*\}$ with respect to the Pompeiu-Hausdorff metric $H$.
   \end{proof}
   \begin{theorem}
       Let (X,d) be a complete metric space and $T:X\rightarrow P_{cl}(X)$ be such that $SFix(T)\neq \emptyset$. Suppose there exist $\alpha, \beta, \gamma \geq 0$ with $\alpha<1$ such that
  \[H(T(x),T(y))\leq \alpha d(x,y)+\beta D(x,T(x))+\gamma D(y, T(y)), \text{ for all } x,y\in X.\]
  Then, the strict fixed point problem $T(x)=\{x\}$ has the following properties:
  \begin{itemize}
      \item[i)] is well-posed in the sense of Reich and Zaslavski;
      \item[ii)] has the data dependence property;
      \item[iii)] is Ulam-Hyers stable;
      \item[iv)] has the Ostrowski stability property;
      \item[v)] the operator $T$ is a multivalued strong quasi-contraction.
      
  \end{itemize}
  \begin{proof}
  By Theorem \ref{th55r2}, we have $Fix(T)=SFix(T)=\{x^*\}.$\\
      i) From the strong retraction-displacement condition iii), from Theorem \ref{th55r2}, we get
      \[d(u_n, x^*)\leq \frac{1+\beta}{(1-\alpha)\xi} D(u_n, T(u_n))\rightarrow 0 \text{ as } n\rightarrow\infty\]
      ii) From Theorem \ref{th55r2}, we get $Fix(T)=SFix(T)=\{x^*\} $ and 
      \[d(x,x^*)\leq \widetilde{K} D(x,T(x)), \text{ for all } x\in X, \]
      with $\widetilde{K}:=\dfrac{1+\beta}{(1-\alpha)\xi}>0.$\\
      Let $F:X\rightarrow P(X)$ a multivalued operator such that:
      \begin{itemize}
          \item[a)] $SFix(F)\neq \emptyset$;
          \item[b)] there exists $\eta>0$ such that $H(T(x), F(x))\leq \eta,$ for all $x\in X.$
      \end{itemize}
      Let $u^*\in SFix(F).$ Then,
      \[d(u^*, x^*)\leq \widetilde{K} D(u^*, T(u^*))\leq \widetilde{K} H(F(u^*), T(u^*))\leq \widetilde{K}\eta.\]
      iii) Let $y^*\in X$ a solution of the inequation $D(y,T(y))\leq \varepsilon.$ Then,
      \begin{align*}
          d(y^*,x^*)&=D(y^*,T(x^*))\leq D(y^*, T(y^*))+H(T(y^*), T(x^*))\\&
          \leq \varepsilon +\alpha d(x^*, y^*)+\beta D(y^*,T(y^*))+\gamma D(x^*, T(x^*))\\&
          \leq \varepsilon +(\alpha+\beta) d(x^*, y^*)+\beta D(x^*, T(y^*))\\&
          \leq (1+\beta)\varepsilon +(\alpha+\beta) d(x^*, y^*).
      \end{align*}
      Hence, $$d(y^*,x^*)\leq c\varepsilon, $$
      with $c:=\dfrac{1+\beta}{1-\alpha-\beta}>0.$\\
      iv) Let $(v_n)\subset X$. We get
      \begin{align*}
      d(v_{n+1}, x^*)&\leq D(v_{n+1}, T(v_n))+H(T(v_n), T(x^*))\\&
      \leq D(v_{n+1}, T(v_n))+\alpha d(v_n, x^*)+\beta D(v_n, T(v_n))+\gamma D(T(x^*), x^*)\\&
      \leq D(v_{n+1}, T(v_n))+\alpha d(v_n,x^*)+\beta d(v_n, x^*)+\beta d( x^*,v_{n+1})+\\&+\beta D(v_{n+1}, T(v_n)).
      \end{align*}
      Thus, by denoting $k:=\dfrac{\alpha+\beta}{1-\beta}$, with $0<k<1$, we obtain: 
      \begin{align*}
      d(v_{n+1},x^*)&\leq \frac{1+\beta}{1-\beta} D(v_{n+1}, T(v_n))+k d(v_n, x^*).\\
      &\leq \frac{1+\beta}{1-\beta} D(v_{n+1}, T(v_n))+ k\Big[\frac{1+\beta}{1-\beta} D(v_n, T(v_{n-1})+k d(v_{n-1}, x^*)\Big]\\
      &\leq \frac{1+\beta}{1-\beta} \Big[ D(v_{n+1}, T(v_n))+ k D(v_n, T(v_{n-1}))+\\&+ k^2 D(v_{n-1}, T(v_{n-2}))+k^3 d(v_{n-2}, x^*)\Big]\\
      & \dots\\
      & \leq \frac{1+\beta}{1-\beta} \Big[ D(v_{n+1}, T(v_n))+ k D(v_n, T(v_{n-1}))+k^2 D(v_{n-1}, T(v_{n-2}))+\cdots \\& \cdots + k^n D(v_1, T(v_0))\Big]
      \end{align*}
      By the Cauchy-Toeplitz Lemma, we get the conclusion.\\
       %Providing $\displaystyle\lim_{n\rightarrow \infty} D(v_{n+1}, T(v_n))=0$, from Cauchy-Toeplitz Lemma, we obtain $v_n\rightarrow 0$ as $n\rightarrow \infty.$\\
       v) Let $x^*\in SFix(T).$ We have:
       \begin{align*}
          H(T(x),\{x^*\})&\leq \alpha d(x,x^*)+\beta D(x, T(x))+\gamma D(x^*, T(x^*)) \\&
           \leq (\alpha+\beta) d(x,x^*)+\beta D(x^*, T(x))
       \end{align*}
     Hence, 
     \[D(T(x), x^*)\leq l d(x,x^*), \]
     where $l:=\dfrac{\alpha+\beta}{1-\beta}d(x,x^*)\in (0,1).$
  \end{proof}
   \end{theorem}
\end{theorem}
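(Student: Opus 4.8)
The plan is to reduce every one of (i)--(v) to the strong retraction--displacement inequality already supplied by Theorem~\ref{th55r2}. I would open by invoking that theorem to record $Fix(T)=SFix(T)=\{x^*\}$ together with
\[
d(x,x^*)\le \widetilde{K}\,D(x,T(x)),\qquad \widetilde{K}:=\dfrac{1+\beta}{(1-\alpha)\xi}>0,\quad\text{for all }x\in X,
\]
for a fixed $\xi\in(0,1)$. Items (i) and (ii) are then immediate. For well-posedness, any sequence $(u_n)$ with $D(u_n,T(u_n))\to 0$ obeys $d(u_n,x^*)\le\widetilde{K}\,D(u_n,T(u_n))\to0$. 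For data dependence, given $F$ as in the definition and any $u^*\in SFix(F)$, the identity $\{u^*\}=F(u^*)$ yields $D(u^*,T(u^*))\le H(F(u^*),T(u^*))\le\eta$, whence $d(u^*,x^*)\le\widetilde{K}\,D(u^*,T(u^*))\le\widetilde{K}\eta$.

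Item (iii), Ulam--Hyers stability, is settled the same way: a solution $y^*$ of $D(y,T(y))\le\varepsilon$ satisfies $d(y^*,x^*)\le\widetilde{K}\,D(y^*,T(y^*))\le\widetilde{K}\varepsilon$. If a constant free of $\xi$ is wanted, I would instead start from $d(y^*,x^*)=D(y^*,T(x^*))\le D(y^*,T(y^*))+H(T(y^*),T(x^*))$, expand the last term by the \'{C}iri\'{c}--Reich--Rus condition, use $D(x^*,T(x^*))=0$ and $D(y^*,T(y^*))\le\varepsilon$, and collect to obtain $(1-\alpha)d(y^*,x^*)\le(1+\beta)\varepsilon$. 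Item (v) is analogous but purely algebraic: writing $D(T(x),x^*)=H(T(x),T(x^*))$, expanding with $D(x^*,T(x^*))=0$, and absorbing $D(x,T(x))\le d(x,x^*)+D(T(x),x^*)$ gives, after solving, $D(T(x),x^*)\le l\,d(x,x^*)$ with $l=\tfrac{\alpha+\beta}{1-\beta}$, which is exactly the defining inequality of a multivalued strong quasi-contraction.

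The substantive work is item (iv), the Ostrowski property, which I would treat exactly as in the \'{C}iri\'{c}-type theorem. For an arbitrary sequence $(v_n)$ with $D(v_{n+1},T(v_n))\to0$, I would first establish the one-step recursion
\[
d(v_{n+1},x^*)\le \frac{1+\beta}{1-\beta}\,D(v_{n+1},T(v_n))+k\,d(v_n,x^*),\qquad k:=\frac{\alpha+\beta}{1-\beta},
\]
by bounding $d(v_{n+1},x^*)=D(v_{n+1},T(x^*))\le D(v_{n+1},T(v_n))+H(T(v_n),T(x^*))$, expanding the $H$-term, and using $D(v_n,T(v_n))\le d(v_n,x^*)+d(x^*,v_{n+1})+D(v_{n+1},T(v_n))$. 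Unfolding the recursion $n$ times produces the convolution bound
\[
d(v_{n+1},x^*)\le \frac{1+\beta}{1-\beta}\sum_{j=0}^{n}k^{\,n-j}\,D(v_{j+1},T(v_j)),
\]
and since $\sum_n k^n$ converges while $D(v_{n+1},T(v_n))\to0$, the Cauchy--Toeplitz Lemma drives the right-hand side to $0$. I expect this to be the only real obstacle, with two delicate points: verifying $k\in(0,1)$ and $l\in(0,1)$, which genuinely requires $\alpha+2\beta<1$ rather than the bare hypothesis $\alpha<1$ (so parts (iv) and (v) implicitly need this stronger assumption); and the bookkeeping of the telescoped sum so that the Cauchy--Toeplitz hypotheses apply verbatim.
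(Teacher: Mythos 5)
Your proposal is correct and follows essentially the same route as the paper: items (i)--(iii) are deduced from the retraction--displacement bound of Theorem~\ref{th55r2}, item (iv) from the one-step recursion with $k=\frac{\alpha+\beta}{1-\beta}$ unfolded into a convolution sum handled by the Cauchy--Toeplitz lemma, and item (v) by absorbing the $D(x,T(x))$ term. Your two side remarks are both accurate and worth keeping: the verification $k,l\in(0,1)$ in (iv) and (v) genuinely requires $\alpha+2\beta<1$, which the paper's statement of this theorem omits (it assumes only $\alpha<1$) although its proof silently uses it, and your $\xi$-free Ulam--Hyers constant $\frac{1+\beta}{1-\alpha}$ is obtained under weaker assumptions than the paper's $\frac{1+\beta}{1-\alpha-\beta}$, which additionally needs $\alpha+\beta<1$.
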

\begin{remark}
    In a similar manner as the two cases from Theorem \ref{th55} and Theorem \ref{th55r2}, we can obtain similar results for the operator $T:X\rightarrow P(X),$ with $SFix(T)\neq \emptyset$ such that there exists $\alpha, \beta, \gamma\in \mathbb{R_{+}},  \alpha+2\gamma<1$ such that
    \begin{align*}
        H(T(x),T(y))&\leq \alpha d(x,y)+ \beta [D(x, T(x))+D(y, T(y))]+\gamma [D(x, T(y))+D(y, T(x))].
    \end{align*}
\end{remark}

\medskip

{\bf Open Question.} By the above results, an important issue rises: under which concrete and verifiable conditions on the multi-valued operator $T:X\to P(X)$ its strict fixed point set $SFix(T)$ is nonempty ? 

Here below we will list such sufficient conditions from the literature.

\begin{theorem}(see J. Jachymski \cite{JACHYMSKI2011169})
Let $(X,d)$ be a complete metric space and $T:X\to P(X)$ be a multi-valued operator. Suppose that the following conditions are satisfied:

\quad (i) $T(T(x))\subset T(x)$, for each $x\in X$;

\quad (ii) for each $x\in X$ and each $\varepsilon>0$ there exists $y\in T(x)$ such that 
$diam(T(y))<\varepsilon$.

Then $SFix(T)\neq\emptyset$.
    \end{theorem}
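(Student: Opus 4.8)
The plan is to produce a strict fixed point as the limit of a sequence of successive approximations whose images shrink to a point. Fix an arbitrary $x_0\in X$ and a sequence $\varepsilon_n\downarrow 0$. Using hypothesis (ii) at $x_0$, I choose $x_1\in T(x_0)$ with $\operatorname{diam}(T(x_1))<\varepsilon_1$; inductively, having $x_n$, I use (ii) at $x_n$ to pick $x_{n+1}\in T(x_n)$ with $\operatorname{diam}(T(x_{n+1}))<\varepsilon_{n+1}$. The role of hypothesis (i) is to make the images nested: since $x_{n+1}\in T(x_n)$, we get $T(x_{n+1})\subseteq T(T(x_n))\subseteq T(x_n)$, and more generally $x_m\in T(x_n)$ for every $m>n$.

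Next I would show that $(x_n)$ is Cauchy. Because all the terms $x_{n+1},x_{n+2},\dots$ lie in $T(x_n)$, for $m,k>n$ we have $d(x_m,x_k)\le \operatorname{diam}(T(x_n))<\varepsilon_n\to 0$, so the sequence is Cauchy and, by completeness of $X$, converges to some $x^*\in X$. Letting $m\to\infty$ in $x_m\in T(x_n)$ for fixed $n$ gives $x^*\in \overline{T(x_n)}$ for every $n$. The closed sets $\overline{T(x_n)}$ are nonempty, nested, and satisfy $\operatorname{diam}(\overline{T(x_n)})=\operatorname{diam}(T(x_n))\to 0$, so Cantor's intersection theorem yields $\bigcap_n \overline{T(x_n)}=\{x^*\}$.

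It remains to verify that $x^*$ is a strict fixed point, i.e. $T(x^*)=\{x^*\}$. Since $T(x^*)\neq\emptyset$, it suffices to prove $T(x^*)\subseteq\{x^*\}$, and for this I would show $T(x^*)\subseteq\overline{T(x_n)}$ for every $n$; intersecting over $n$ then forces $T(x^*)\subseteq\bigcap_n\overline{T(x_n)}=\{x^*\}$. The mechanism is again the invariance (i): from $x^*\in\overline{T(x_n)}$ one wants to transfer the inclusion $T(x^*)\subseteq T(x_n)$, which is immediate once $x^*\in T(x_n)$, since then $T(x^*)\subseteq T(T(x_n))\subseteq T(x_n)$. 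Combining these inclusions across all $n$ pins $T(x^*)$ inside the singleton $\{x^*\}$.

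The main obstacle is precisely this last step: reconciling set-membership with closure. The construction only delivers $x^*\in\overline{T(x_n)}$, whereas the clean use of (i) wants $x^*\in T(x_n)$; as no continuity or upper semicontinuity of $T$ is assumed, passing from \emph{$x^*$ in the closure} to \emph{$T(x^*)$ inside the nested family} is the delicate point, and it is resolved cleanly when the values of $T$ are closed. I expect to handle the general case by exploiting the vanishing diameters together with (i): any $y\in T(x^*)$ must be arbitrarily close to the shrinking sets $T(x_n)$, which collapse to $x^*$, forcing $y=x^*$. The Cauchy/limit/Cantor part is routine; the stationary property at the limit is where the real content lies.
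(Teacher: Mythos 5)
The paper states this result without proof---it is quoted as a known theorem of Jachymski---so there is no in-paper argument to compare against; I assess your proposal on its own terms. Your construction is the standard one and is correct as far as it goes: choosing $x_{n+1}\in T(x_n)$ with $\operatorname{diam}(T(x_{n+1}))<\varepsilon_{n+1}$, condition (i) gives the nesting $T(x_{n+1})\subseteq T(T(x_n))\subseteq T(x_n)$, hence $x_m\in T(x_n)$ for $m>n$; the shrinking diameters make $(x_n)$ Cauchy, and completeness produces $x^*$ with $\{x^*\}=\bigcap_n\overline{T(x_n)}$.

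The gap is exactly where you locate it, and it is not closable: you never actually prove $T(x^*)=\{x^*\}$, you only express the expectation that any $y\in T(x^*)$ must be arbitrarily close to the shrinking sets $T(x_n)$. Nothing in (i) or (ii) forces this, because the only way to bring (i) to bear at $x^*$ is to know $x^*\in T(x_n)$, and the construction only yields $x^*\in\overline{T(x_n)}$. In fact the statement is false for general $P(X)$-valued $T$: take $X=\{0\}\cup\{1/n:n\ge 1\}\subset\mathbb{R}$ with the usual metric, $T(1/n)=\{1/m:m>n\}$ and $T(0)=\{1/m:m\ge 1\}$. Then $T(T(x))\subseteq T(x)$ for every $x$, and since $\operatorname{diam}(T(1/m))=1/(m+1)$ while every value of $T$ contains points $1/m$ with $m$ arbitrarily large, condition (ii) holds; yet no value of $T$ is a singleton, so $SFix(T)=\emptyset$. (This also shows that the theorem as transcribed here, with $T:X\to P(X)$, needs the values to be closed, as in Jachymski's original formulation.) With closed values your own parenthetical remark finishes the proof: $x_m\in T(x_n)$ for $m>n$ plus closedness give $x^*\in T(x_n)$, whence $T(x^*)\subseteq T(T(x_n))\subseteq T(x_n)$ for every $n$, so $T(x^*)\subseteq\bigcap_n T(x_n)\subseteq\bigcap_n\overline{T(x_n)}=\{x^*\}$, and nonemptiness gives equality. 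You should therefore add the closedness hypothesis and run that two-line ending, rather than hope for a ``general case'' argument, which does not exist.
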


    \begin{theorem}(see J.-P. Aubin, J. Siegel \cite{AS})
        Let $(X,d)$ be a complete metric space and $T:X\to P(X)$ be a multi-valued operator. Suppose that:

        \quad (i) $T$ is lower semi-continuous;

        \quad (ii) there exists a function $\psi:X\to \mathbb{R}_+$ such that
        \begin{equation}
            \mbox{ for every } x\in X \mbox{ and every } y\in T(x) \mbox{ we have } d(x,y)\le \psi(x)-\psi(y). 
        \end{equation}
        Then $SFix(T)\neq\emptyset$.
    \end{theorem}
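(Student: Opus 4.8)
The plan is to build, by a greedy iterative procedure, a trajectory of $T$ that is forced to converge, and then to use the lower semicontinuity of $T$ to show that its limit is a strict fixed point. The guiding idea is that condition (ii) turns $\psi$ into a Lyapunov functional: along any successive approximation $y\in T(x)$ the value of $\psi$ drops by at least $d(x,y)$, so orbits can only move a total distance bounded by the (nonnegative, hence bounded below) decrease of $\psi$.

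First I would fix an arbitrary $x_0\in X$ and construct $(x_n)$ inductively: given $x_n$, since $T(x_n)\neq\emptyset$ and $\psi\ge 0$ the number $m_n:=\inf\{\psi(z):z\in T(x_n)\}$ is finite and nonnegative, so I may pick $x_{n+1}\in T(x_n)$ with $\psi(x_{n+1})\le m_n+2^{-n}$. By (ii), $d(x_n,x_{n+1})\le \psi(x_n)-\psi(x_{n+1})$, so $(\psi(x_n))_n$ is non-increasing and bounded below by $0$, hence convergent; in particular $\psi(x_n)-\psi(x_m)\to 0$ as $n,m\to\infty$. Summing the dissipativity inequalities and using the triangle inequality gives, for $m>n$,
\[ d(x_n,x_m)\le \sum_{k=n}^{m-1} d(x_k,x_{k+1})\le \psi(x_n)-\psi(x_m), \]
so $(x_n)$ is Cauchy and, by completeness of $X$, converges to some $x^*\in X$.

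It remains to prove $T(x^*)=\{x^*\}$; since $T(x^*)\neq\emptyset$ it suffices to show every $y\in T(x^*)$ equals $x^*$. This is where the lower semicontinuity of $T$ enters: by its sequential characterization, given $y\in T(x^*)$ and $x_n\to x^*$ there exist $y_n\in T(x_n)$ with $y_n\to y$. Applying (ii) to $y_n\in T(x_n)$ together with the greedy bound $\psi(x_{n+1})\le m_n+2^{-n}\le \psi(y_n)+2^{-n}$, I obtain
\[ d(x_n,y_n)\le \psi(x_n)-\psi(y_n)\le \big(\psi(x_n)-\psi(x_{n+1})\big)+2^{-n}. \]
The right-hand side tends to $0$ because $(\psi(x_n))$ converges, while the left-hand side tends to $d(x^*,y)$ by continuity of the metric. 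Hence $d(x^*,y)=0$, i.e. $y=x^*$, so $T(x^*)=\{x^*\}$ and $x^*\in SFix(T)$.

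The main obstacle is controlling the limit point $x^*$: dissipativity alone (an Ekeland/Caristi-type hypothesis) would ordinarily require $\psi$ to be lower semicontinuous in order to guarantee that $x^*$ inherits the near-minimality of $\psi$ along the orbit. The crux of the argument is that the lower semicontinuity of $T$ substitutes for this, letting me approximate an arbitrary $y\in T(x^*)$ by points $y_n\in T(x_n)$, after which the greedy minimization of $\psi$ over $T(x_n)$ and the convergence of $(\psi(x_n))$ squeeze $d(x_n,y_n)$ to zero. I would double-check that the greedy selection is legitimate (it uses only $T(x_n)\neq\emptyset$ and finiteness of the infimum) and note that, in contrast to a Zorn/Bishop--Phelps formulation via the order $x\succeq y\iff d(x,y)\le\psi(x)-\psi(y)$, this sequential construction is self-contained and avoids any appeal to maximal elements.
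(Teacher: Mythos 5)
Your argument is correct. Note that the paper itself states this theorem only as a quoted result from Aubin--Siegel, with no proof supplied, so there is nothing internal to compare against; your greedy construction (choosing $x_{n+1}\in T(x_n)$ with $\psi(x_{n+1})\le\inf_{z\in T(x_n)}\psi(z)+2^{-n}$, extracting a Cauchy trajectory from the telescoping dissipativity estimate, and then using the sequential characterization of lower semicontinuity to squeeze $d(x_n,y_n)\to d(x^*,y)=0$ for an arbitrary $y\in T(x^*)$) is essentially the classical Aubin--Siegel proof, and every step checks out, including the finiteness of the infimum and the fact that $\psi(x_n)-\psi(x_{n+1})\to 0$ because $(\psi(x_n))$ is monotone and bounded below.
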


\begin{theorem}(see J.-P. Aubin, J. Siegel \cite{AS})
        Let $(X,d)$ be a complete metric space and $T:X\to P(X)$ be a multi-valued operator. Suppose that there exists a lower semi-continuous function $\psi:X\to \mathbb{R}_+$ such

        \begin{equation}
            \mbox{ for every } x\in X \mbox{ and every } y\in T(x) \mbox{ we have } d(x,y)\le \psi(x)-\psi(y). 
        \end{equation}
        Then $SFix(T)\neq\emptyset$.
    \end{theorem}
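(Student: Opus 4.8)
The plan is to run a Caristi-type argument built on the Br\o{}ndsted partial order induced by $\psi$, and to exhibit the strict fixed point as a minimal element of this order. First I would introduce on $X$ the relation
\[ y \preceq x \iff d(x,y) \le \psi(x) - \psi(y). \]
A routine check shows $\preceq$ is a partial order: reflexivity is $d(x,x)=0$; antisymmetry follows by adding the two defining inequalities to force $d(x,y)=0$; and transitivity is exactly the triangle inequality combined with telescoping of the $\psi$-differences. Two facts should be recorded for later use: $\psi$ is order-preserving, i.e. $y \preceq x \Rightarrow \psi(y)\le\psi(x)$ (immediate from $d\ge 0$); and the hypothesis can be restated as $T(x)\subseteq\{y : y\preceq x\}$ for every $x$, that is, every value of $T$ lies in the lower cone of its argument.

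The core of the argument is to verify, for Zorn's lemma in its minimal-element form, that every chain $C\subseteq(X,\preceq)$ admits a lower bound. Set $m:=\inf_{x\in C}\psi(x)\ge 0$. I would extract a $\preceq$-decreasing sequence $(x_n)$ in $C$ with $\psi(x_n)\downarrow m$, for instance by letting $x_n$ be the $\preceq$-minimum of the first $n$ terms of an arbitrary minimizing sequence $(z_k)$: finite subsets of a chain have a minimum, and order-preservation of $\psi$ gives $\psi(x_n)=\min_{k\le n}\psi(z_k)\to m$. Since $x_p\preceq x_n$ for $p>n$ yields $d(x_n,x_p)\le\psi(x_n)-\psi(x_p)$, convergence of $\psi(x_n)$ forces $(x_n)$ to be Cauchy; completeness of $X$ produces a limit $x^*$, and lower semicontinuity of $\psi$ gives $\psi(x^*)\le\liminf_n\psi(x_n)=m$.

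Next I would upgrade $x^*$ to a lower bound for all of $C$, not merely for the sequence. Fix $x\in C$. Because $(x_n)$ is $\preceq$-decreasing and $C$ is totally ordered, the set $\{n : x_n\preceq x\}$ is upward closed, so either $x_n\preceq x$ for all large $n$, or $x\preceq x_n$ for every $n$. In the first case, passing to the limit in $d(x,x_n)\le\psi(x)-\psi(x_n)$ gives $d(x,x^*)\le\psi(x)-m\le\psi(x)-\psi(x^*)$, i.e. $x^*\preceq x$. In the second case, $d(x_n,x)\le\psi(x_n)-\psi(x)$ together with $\psi(x)\ge m$ and $\psi(x_n)\to m$ forces $\psi(x)=m$ and $x=x^*$, so $x^*\preceq x$ holds by reflexivity. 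Hence every chain has a lower bound, and Zorn's lemma furnishes a $\preceq$-minimal element $x^*$.

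It then remains to read off the conclusion. By the restated hypothesis, every $y\in T(x^*)$ satisfies $y\preceq x^*$, and minimality of $x^*$ forces $y=x^*$; thus $T(x^*)\subseteq\{x^*\}$, and since $T(x^*)$ is nonempty we obtain $T(x^*)=\{x^*\}$, i.e. $x^*\in SFix(T)$. I expect the main obstacle to be the chain step, specifically verifying that the limit $x^*$ dominates the \emph{entire} chain rather than just the minimizing sequence: this is precisely where completeness and the lower semicontinuity of $\psi$ enter, and where the dichotomy on the position of an arbitrary $x\in C$ relative to $(x_n)$ must be treated carefully. It is worth noting that no continuity or semicontinuity of $T$ itself is required for this argument.
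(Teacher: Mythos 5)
Your argument is correct. Note, however, that the paper itself offers no proof of this statement: it is quoted from Aubin and Siegel \cite{AS} as one of several known sufficient conditions for $SFix(T)\neq\emptyset$, so there is nothing in the text to compare against. What you have written is, in essence, the classical Br\o ndsted-order proof from that circle of ideas (Caristi--Kirk--Ekeland): the order $y\preceq x\iff d(x,y)\le\psi(x)-\psi(y)$, the verification that every chain has a lower bound via a $\psi$-minimizing, $\preceq$-decreasing, hence Cauchy subsequence, and Zorn's lemma to produce a minimal element, which the dissipativity hypothesis $T(x)\subseteq\{y:y\preceq x\}$ forces to be a strict fixed point. All the delicate points are handled properly: the dichotomy showing the limit $x^*$ bounds the whole chain (not just the extracted sequence) is exactly where completeness and the lower semicontinuity of $\psi$ are consumed, and you correctly observe that no regularity of $T$ is needed. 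Two trivial remarks: one should assume $X\neq\emptyset$ so that the empty chain has a lower bound, and in the second branch of your dichotomy the identification $x=x^*$ uses uniqueness of limits in a metric space; both are immediate. The proof stands as a complete, self-contained justification of the cited result.
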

\begin{theorem}(see J. Tiammee, S. Suantai \cite{TIAMMEESUANTAI2017}  )
    Let (X,d) be a complete metric space and let $T:X\rightarrow P_b(X)$ be a multivalued operator satisfying the following properties:
    \begin{itemize}
        \item[1)] there exist two  continuous and non-decreasing functions $\psi, \theta:[0,\infty)\rightarrow [0,\infty)$ with $\psi(t)=0 $ iff $t=0$ and $\theta(t)=0$ iff $t=0$ satisfying the following property:
        \[\psi(H(T(x), T(y))\leq \psi(d(x,y))-\theta(d(x,y)), \text{ for all } x,y\in X\]
        \item[2)] there exists $\alpha\in (0,1)$ such that for all $u\in T(x)$, there exists $v\in T(y)$ such that
        \[d(u,v)\leq \alpha d(x,y).\]
    \end{itemize}
    Then $SFix(T)\neq \emptyset$.
\end{theorem}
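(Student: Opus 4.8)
The plan is to treat the two hypotheses as playing complementary roles: condition (2) is the engine that produces a convergent orbit, while condition (1) is a $(\psi,\theta)$-weak contraction that I will use to pass from a mere fixed point to a \emph{strict} one and to guarantee uniqueness. The first observation is that (2) symmetrizes: applying it with the roles of $x$ and $y$ interchanged yields $e(T(x),T(y))\le \alpha d(x,y)$ and $e(T(y),T(x))\le \alpha d(x,y)$, hence $H(T(x),T(y))\le \alpha d(x,y)$ for all $x,y\in X$. Thus $T$ is a multivalued $\alpha$-contraction in the sense of Nadler, which is exactly what is needed to control the step sizes of a sequence of successive approximations.

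First I would fix $x_0\in X$ and $x_1\in T(x_0)$, and then use condition (2) inductively: given $x_{n+1}\in T(x_n)$, apply (2) with the pair $(x_n,x_{n+1})$ and the element $x_{n+1}\in T(x_n)$ to select $x_{n+2}\in T(x_{n+1})$ with $d(x_{n+1},x_{n+2})\le \alpha\, d(x_n,x_{n+1})$. This gives $d(x_n,x_{n+1})\le \alpha^n d(x_0,x_1)$, so $(x_n)$ is Cauchy and, by completeness, converges to some $x^*\in X$. Next I would check that $x^*$ is a fixed point: from condition (1), $\psi\big(H(T(x_n),T(x^*))\big)\le \psi\big(d(x_n,x^*)\big)-\theta\big(d(x_n,x^*)\big)\to 0$ using the continuity of $\psi$ and $\theta$ at $0$ together with $\psi(0)=\theta(0)=0$; since $\psi$ is non-decreasing with $\psi(t)=0$ only at $t=0$, this forces $H(T(x_n),T(x^*))\to 0$. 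Combining with $D(x^*,T(x^*))\le d(x^*,x_{n+1})+H(T(x_n),T(x^*))$ gives $D(x^*,T(x^*))=0$, so $x^*\in \overline{T(x^*)}$ (passing to closures, since the values lie only in $P_b(X)$).

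The heart of the matter, and the step I expect to be the main obstacle, is upgrading this to $T(x^*)=\{x^*\}$, i.e. showing that $e\big(T(x^*),\{x^*\}\big)=\sup_{y\in T(x^*)}d(y,x^*)=0$. A Nadler-type contraction alone does not force the image of a fixed point to be a singleton, so here condition (1) must be used in an essential way: it is the strictly positive decrement $-\theta(d(x,y))$ that has to exclude the presence of a point $y\in T(x^*)$ with $y\ne x^*$. The natural framework is the approximate endpoint property $\inf_{x\in X}\sup_{y\in T(x)}d(x,y)=0$; I would aim to verify this along the constructed orbit and then invoke an Amini--Harandi-type endpoint argument, in which condition (1) converts an approximate endpoint into an exact one. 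The delicate point is controlling the \emph{spread} (diameter) of the iterates $T(x_n)$, which does not shrink automatically under a Hausdorff contraction and must be tamed precisely by the weak-contraction inequality in (1).

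Finally, uniqueness of the strict fixed point is immediate and serves as a useful consistency check: if $T(x^*)=\{x^*\}$ and $T(y^*)=\{y^*\}$, then $d(x^*,y^*)=H(T(x^*),T(y^*))$, and (1) gives $\psi\big(d(x^*,y^*)\big)\le \psi\big(d(x^*,y^*)\big)-\theta\big(d(x^*,y^*)\big)$, so $\theta\big(d(x^*,y^*)\big)\le 0$ and hence $d(x^*,y^*)=0$. This closes the argument modulo the singleton step, which is where the real work lies.
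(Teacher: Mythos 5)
The paper gives no proof of this statement: it is quoted from Tiammee and Suantai as one item in a list of known sufficient conditions for $SFix(T)\neq\emptyset$, so there is nothing on the paper's side to compare with. Judged on its own terms, your argument is sound up to and including $D(x^*,T(x^*))=0$, and your uniqueness check at the end is fine; but the step you yourself flag as ``where the real work lies'' --- passing from $x^*\in\overline{T(x^*)}$ to $T(x^*)=\{x^*\}$ --- is not carried out, only deferred to an ``Amini--Harandi-type endpoint argument.'' That argument requires the approximate endpoint property $\inf_{x\in X}\sup_{y\in T(x)}d(x,y)=0$ as an input, and you never establish it; as you correctly suspect, the diameters of the sets $T(x_n)$ are not controlled by a Hausdorff-type contraction, so this property does not come from the orbit you built.

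Moreover, the gap cannot be closed from the hypotheses as stated. Take $X=\{0,1\}$ with the discrete metric and $T(0)=T(1)=\{0,1\}$. Then $H(T(x),T(y))=0$ for all $x,y$, so condition 1) holds with $\psi(t)=2t$ and $\theta(t)=t$; condition 2) holds with any $\alpha\in(0,1)$ by choosing $v=u$ (since $T(y)=X$ always contains $u$); yet $T(x)$ is never a singleton, so $SFix(T)=\emptyset$. Hence the statement as transcribed is missing a hypothesis --- in the endpoint literature, and in the original Tiammee--Suantai theorem, this role is played precisely by the approximate endpoint property --- and no proof of the statement in this exact form can exist. Your instinct to ``verify the approximate endpoint property along the constructed orbit'' identifies exactly the point at which any attempt must fail.
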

\begin{theorem}(see R. Espínola, M. Hosseini, K. Nourouzi \cite{ESPINOLA2015})
Let X be a Banach space with characteristic of convexity $\epsilon_0(X)\leq 1.$ Let C be a nonempty, weakly compact and convex subset of X and let $T:X\rightarrow P(X)$ be a non-expansive multivalued operator, i.e.,
$$H(T(x),T(y))\le \|x-y\|, \mbox{ for every } x,y\in X.$$ Suppose that 
\[\lim _{n \rightarrow \infty} \sup_{z\in T(x_n)}\|x_n-z\|=0. \]
Then $SFix(T)\neq \emptyset$.
   
\end{theorem}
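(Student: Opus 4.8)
The plan is to run the classical asymptotic-center machinery for nonexpansive operators, upgraded so that the one-sided (excess) hypothesis $\sup_{z\in T(x_n)}\|x_n-z\|\to 0$ yields a \emph{strict} fixed point rather than a mere fixed point. Throughout I assume, as the statement tacitly requires, that the approximate sequence $(x_n)$ lies in $C$ and that $T$ carries $C$ into nonempty subsets of $C$; since $C$ is weakly compact it is bounded, so $(x_n)$ is bounded. First I would record the decisive displacement estimate: for any $u\in C$ and any $z\in T(u)$, nonexpansiveness gives $D(z,T(x_n))\le H(T(u),T(x_n))\le\|u-x_n\|$, so one may pick $w_n\in T(x_n)$ with $\|z-w_n\|\le\|u-x_n\|+\tfrac1n$, and with $\rho_n:=\sup_{w\in T(x_n)}\|x_n-w\|\to0$ the triangle inequality gives
\[\|z-x_n\|\le\|z-w_n\|+\|w_n-x_n\|\le\|u-x_n\|+\tfrac1n+\rho_n.\]
Writing $r(u):=\limsup_n\|x_n-u\|$ for the asymptotic-radius functional, this reads $r(z)\le r(u)$ for every $z\in T(u)$.

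Next I would introduce the asymptotic center $A:=\{u\in C: r(u)=r_*\}$, where $r_*:=\inf_{u\in C}r(u)$. Since $r$ is convex and weakly lower semicontinuous and $C$ is weakly compact and convex, $A$ is nonempty, convex and weakly compact. The inequality just derived shows $A$ is $T$-invariant: if $u\in A$, then each $z\in T(u)$ obeys $r(z)\le r_*$, whence $r(z)=r_*$ and $z\in A$, i.e. $T(u)\subseteq A$.

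The geometry of $X$ enters at the final, decisive step, where I would collapse $A$ to a single point. A midpoint computation with the modulus of convexity $\delta_X$ shows that for $x^*,y^*\in A$ the midpoint $m=\tfrac{x^*+y^*}{2}\in A$ satisfies $r_*=r(m)\le r_*\bigl(1-\delta_X(\|x^*-y^*\|/r_*)\bigr)$, forcing $\delta_X(\|x^*-y^*\|/r_*)=0$ and hence $\|x^*-y^*\|\le\epsilon_0(X)\,r_*\le r_*$ by the hypothesis $\epsilon_0(X)\le1$. Thus $\operatorname{diam}(A)\le r_*$, which is precisely a normal-structure inequality. Invoking the normal structure supplied by $\epsilon_0(X)\le1$, I would pass (via Zorn's lemma, using weak compactness to guarantee nonempty chain-intersections) to a minimal nonempty weakly compact convex $T$-invariant subset $K\subseteq A$, and argue that $K$ is a singleton: if $\operatorname{diam}(K)>0$, the Chebyshev center of $K$ is a \emph{proper} closed convex $T$-invariant subset, contradicting minimality. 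Then $K=\{x^*\}$ with $T(x^*)\subseteq K$, and since $T(x^*)\neq\emptyset$ we obtain $T(x^*)=\{x^*\}$, i.e. $x^*\in SFix(T)$.

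The hard part is exactly this collapse. The midpoint estimate only delivers $\operatorname{diam}(A)\le r_*$, not a single point, so the argument must genuinely exploit normal structure; the delicate case is the borderline $\epsilon_0(X)=1$, where $\delta_X(1)$ may vanish and one must check that normal structure still holds (or replace the crude midpoint bound by the sharper Chebyshev-radius estimate). A second, purely multivalued, subtlety is verifying that the Chebyshev center of the minimal set $K$ is itself $T$-invariant: this uses the fact that a minimal invariant set coincides with the closed convex hull of its image under $T$, together with nonexpansiveness, and is the step most sensitive to what is assumed about the values of $T$.
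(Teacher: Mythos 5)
The paper does not prove this statement at all: it is quoted verbatim (as one of several known sufficient conditions for $SFix(T)\neq\emptyset$ answering the Open Question) from Esp\'inola--Hosseini--Nourouzi \cite{ESPINOLA2015}, so there is no in-paper proof to compare against and your attempt has to stand on its own. Your opening moves are sound and do match the standard strategy for this kind of result: the estimate $r(z)\le r(u)$ for $z\in T(u)$ (using $D(z,T(x_n))\le H(T(u),T(x_n))\le\|u-x_n\|$ together with $\rho_n\to 0$), the resulting strong invariance $T(u)\subseteq A$ of the asymptotic center $A$, and the midpoint/modulus-of-convexity bound $\operatorname{diam}(A)\le \epsilon_0(X)\,r_*\le r_*$ are all correct (with the harmless caveat that the division by $r_*$ presupposes $r_*>0$, the case $r_*=0$ being trivial, and that one must read the garbled hypotheses as $T:C\to P(C)$ with $(x_n)\subseteq C$).

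The genuine gap is exactly where you admit "the hard part" lies, and the patch you propose does not work. The Kirk-type argument --- pass to a minimal invariant weakly compact convex $K\subseteq A$ and show the Chebyshev-center set of $K$ is a proper invariant subset --- breaks down for multivalued operators that are nonexpansive only in the Pompeiu--Hausdorff sense: for $u$ in the Chebyshev-center set and $z\in T(u)$, nonexpansiveness gives $D(z,T(v))\le\|u-v\|$ for $v\in K$, i.e.\ $z$ is close to \emph{some} point of $T(v)$, whereas invariance of the Chebyshev-center set would require $\|z-w\|\le r$ for \emph{every} $w\in T(v)$. This is precisely the obstruction that makes fixed-point (let alone endpoint) theory for nonexpansive multimaps under normal structure a hard problem, so the step cannot be waved through; moreover, the normal structure you invoke is only guaranteed by $\epsilon_0(X)<1$, not by the borderline $\epsilon_0(X)\le 1$, and nowhere in your collapse step does the approximate-endpoint hypothesis re-enter, although it must: the actual argument in \cite{ESPINOLA2015} uses the strong invariance $T(u)\subseteq A$ together with $\operatorname{diam}(A)\le\epsilon_0(X)r_*$ to produce a \emph{new} approximate-endpoint-type sequence inside $A$ with strictly smaller asymptotic radius and iterates this to force the nested centers to shrink to a single endpoint. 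As written, your proof establishes $\operatorname{diam}(A)\le r_*$ and stops short of the theorem.
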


\end{document}